\DeclareRobustCommand{\SkipTocEntry}[5]{}
\setlist{itemsep=.5\baselineskip,topsep=.5\baselineskip}
\numberwithin{equation}{section}
\theoremstyle{plain}
\newtheorem{theorem}{Theorem}[section]
\newtheorem{thm}[theorem]{Theorem}
\newtheorem{lemma}[theorem]{Lemma}
\newtheorem{defn}[theorem]{Definition}
\newtheorem{cor}[theorem]{Corollary}
\newtheorem{rmk}[theorem]{Remark}
\newcommand{\iso}{\cong}
\newcommand{\arr}{\rightarrow}
\newcommand{\R}{\mathbb{R}}
\newcommand{\C}{\mathbb{C}}
\newcommand{\Z}{\mathbb{Z}}
\newcommand{\eps}{\epsilon}
\newcommand{\Id}{\mathbbm{1}}
\newcommand{\mcC}{\mathcal{C}}
\newcommand{\mcF}{\mathcal{F}}
\newcommand{\mcG}{\mathcal{G}}
\newcommand{\mcK}{\mathcal{K}}
\newcommand{\mcU}{\mathcal{U}}
\newcommand{\mbN}{\mathbb{N}}
\newcommand{\norm}[1]{\left\lVert#1\right\rVert}
\newcommand{\Free}{\mcF}
\DeclareMathOperator{\hlp}{hlp}
\DeclareMathOperator{\tr}{tr}
\DeclareMathOperator{\Cl}{Cliff}
\title{A group with at least subexponential hyperlinear profile}
\author{William Slofstra}
\thanks{Institute for Quantum Computing and Department of
    Pure Mathematics, University of Waterloo, Waterloo, Canada. email:
    \texttt{weslofst@uwaterloo.ca}}
\begin{document}

\begin{abstract}
    The hyperlinear profile of a group measures the growth rate of the
    dimension of unitary approximations to the group. We construct a
    finitely-presented group whose hyperlinear profile is at least
    subexponential, i.e. at least $\exp({1/\eps^{k}})$ for some $0 < k < 1$. 
    We use this group to give an example of a two-player non-local 
    game requiring subexponential Hilbert space dimension to play
    near-perfectly. 
\end{abstract}

\maketitle

\section{Introduction}

Let $G = \langle S : R \rangle$ be a finitely-presented group. An
\emph{approximate representation} of $G$ is a unitary representation of the
free group $\Free(S)$ in which the defining relations of $G$ are close to the
identity. To quantify this, let $\norm{\cdot}_f$ be the normalized Frobenius
norm, i.e. if $A$ is a $d \times d$ matrix then $\norm{A}_f := \sqrt{\tr(A^*
A)/d}$.  A \emph{$d$-dimensional $\eps$-representation of $G$} is a homomorphism
$\phi : \Free(S) \arr \mcU(\C^d)$ such that
\begin{equation*}
    \norm{\phi(r) - 1}_f \leq \eps
\end{equation*}
for all $r \in R$.  A word $w \in \Free(S)$ is said to be \emph{non-trivial in
approximate representations} if there is some $\delta > 0$ such that for all
$\eps > 0$, there is an $\eps$-representation $\phi$ with 
\begin{equation*}
    \norm{\phi(w) - \Id}_f \geq \delta.
\end{equation*}
In other words, $w$ is non-trivial in approximate representations if there is a
sequence of $\eps$-representations with $\eps \arr 0$, in which $w$ is bounded
away from the identity. The dimensions of the approximate representations in
this sequence do not have to be constant, and indeed, unless the element
represented by $w$ is non-trivial in some finite-dimensional exact
representation of $G$, the dimensions of the approximate representations in
this sequence must go to infinity. The hyperlinear profile of $G$ measures the
growth rate of dimensions in such a sequence:
\begin{defn}[\cite{SV17}]
    Given $X \subset \Free(S)$, define
    \begin{equation*}
        \hlp(X) : \R_{>0} \times \R_{>0} \arr \mbN \cup \{+\infty\}
    \end{equation*} 
    by setting $\hlp(X;\delta,\eps)$ to be the smallest integer $d$ for which
    there is a $d$-dimensional $\eps$-representation $\phi$ such that 
    $\norm{\phi(w) - \Id}_f \geq \delta$ for all $w \in X$ (or $+\infty$ if no
    such integer $d$ exists). 

    The \emph{hyperlinear profile of $G$} is the collection of functions
    $\hlp(X)$, where $X$ ranges across finite subsets of $\Free(S)$ not
    containing any words which are trivial in $G$. We say that the hyperlinear
    profile of $G$ is $\geq f(\delta,\eps)$ if there is some set $X$, not
    containing any words which are trivial in $G$, such that
    $\hlp(X;\delta,\eps) \geq f(\delta,\eps)$ for all $\delta,\eps > 0$. 
\end{defn}
A group is said to be \emph{hyperlinear} if every non-trivial element is
non-trivial in approximate representations. It is a major open question to
determine whether there is a non-hyperlinear group; such a group would provide
a counterexample to the Connes embedding problem. The difficulty of this
question can perhaps be seen in the number of other open questions of this type
in the literature. Indeed, for any class $\mcC$ of groups with bi-invariant
metrics, there is a class of $\mcC$-approximable groups defined analogously to
hyperlinear groups, with the metric groups in $\mcC$ replacing the unitary
groups with the norm $\norm{\cdot}_f$ \cite{Th12, DCGLT17}.\footnote{For general
classes $\mcC$, it is actually necessary to adjust this definition a bit to
properly handle approximations to finite sets; see \cite{Th12} for the precise
definition. For well-behaved classes, such as unitary groups with metric
induced by $\norm{\cdot}_f$ or the operator norm, the definition used above is
sufficient.} For instance, if $\mcC$ is the class of unitary groups with
metric induced from the operator norm, then $\mcC$-approximable groups are
known as \emph{MF groups}. When $\mcC$ is the class of symmetric groups with
the Hamming metric, then $\mcC$-approximable groups are known as \emph{sofic
groups}.  Just as with hyperlinear groups, it is not known if there is are
non-MF or non-sofic groups. Recently, De Chiffre, Glebsky, Lubotzky, and Thom
have shown that there is a non-$\mcC$-approximable group when $\mcC$ is the
class of unitary groups with the unnormalized Frobenius norm \cite{DCGLT17}.
Thom has also shown that there is a non-$\mcC$-approximable group when $\mcC$
is the class of finite groups with commutator-contractive invariant length
functions \cite{Th12}. These seem to be the only natural classes $\mcC$ of
metric groups where the existence of non-$\mcC$-approximable groups has been
settled. 

In terms of hyperlinear profile, a group $G = \langle S : R\rangle$ is
non-hyperlinear if and only if there is a word $w \in \mcF$ and a $\delta > 0$
that $\hlp(w;\delta, \eps)$ is infinite for all $\eps$ close to zero. Given the
apparent difficulty in finding a non-hyperlinear group, it seems interesting to
look for lower bounds on $\hlp(w;\delta,\eps)$ in specific groups and for
specific words $w$, especially lower bounds that show that
$\hlp(w;\delta,\eps)$ grows fast. Lower bounds on the order of
$\sqrt{\ln(1/\eps)}$, $1/\eps^{2/3}$, and $1/\eps^2$ have been given in
\cite{Fr13}, \cite{SV17}, and \cite{DCGLT17}, respectively. The point of this
note is to construct a group where we can show a subexponential lower bound on
hyperlinear profile:
\begin{thm}\label{T:main}
    There is a finitely-presented group $\mcG = \langle S : R \rangle$, a word $w
    \in \mcF(S)$, and constants $\alpha, C, C' > 0$, such that 
    \begin{equation*}
        \hlp(w; \delta,\eps) \geq C' \cdot \exp\left[C \cdot \left(\frac{\delta}{\eps}\right)^{\!\! \alpha} \right]
    \end{equation*}
    for all $\eps,\delta > 0$. 
\end{thm}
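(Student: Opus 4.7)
The plan is to leverage the correspondence between approximate unitary representations of a finitely presented group and approximate quantum strategies for an associated non-local game. If one can construct a non-local game whose near-perfect strategies require Hilbert spaces of subexponential dimension, then the associated finitely presented ``solution group'' will provide a witness of subexponential hyperlinear profile, and the two-player game promised in the abstract will be this very game.

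First I would set up the algebraic framework. To a linear constraint system $\mcL$ over $\Z_2$ one associates a finitely presented solution group $\Gamma(\mcL)$ whose defining relations encode (i) that each generator is an involution, (ii) that generators sharing a constraint pairwise commute, and (iii) the linear constraints themselves. Via the Cleve--Mittal/Slofstra dictionary, an $\eps$-representation of $\Gamma(\mcL)$ in dimension $d$ yields an $O(\eps)$-perfect quantum strategy for the associated game in shared entanglement of dimension $d$, and conversely every near-perfect strategy gives rise to an approximate representation of comparable dimension. This reduces the problem to producing a single $\mcL$ whose near-perfect strategies need subexponentially large Hilbert spaces.

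Second, I would engineer $\mcL$ so that near-perfect strategies are forced to carry a Clifford-type structure of subexponentially large rank. A natural template is to chain Magic-Square- or Magic-Pentagram-style constraints so that the combined commutation/anticommutation pattern forces the operator solution to contain $n$ pairwise anticommuting involutions; such a collection already has exact representation dimension $2^{\lfloor n/2\rfloor}$. To upgrade this exact lower bound to an approximate one, I would apply a Gowers--Hatami-style stability argument for the Clifford relations (in the spirit of the proofs giving the $\sqrt{\ln(1/\eps)}$, $1/\eps^{2/3}$, and $1/\eps^2$ bounds cited in the introduction) with sharpened quantitative parameters, arranging the construction so that the number of anticommuting involutions witnessed by a given $\eps$-representation scales like $n \sim (\delta/\eps)^{\alpha}$ for a suitable exponent $\alpha$.

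The main obstacle is the quantitative passage from an $\eps$-representation of $\Gamma(\mcL)$ back to a ``rigid'' approximate Clifford representation without losing more than a polynomial factor in the dimension: naively iterating $n$ local stability statements costs $\eps \mapsto \eps \cdot \operatorname{poly}(n)$, which only yields a polynomial hyperlinear profile. To avoid this, I would seek a single global stability statement for the full Clifford relation system rather than chaining local ones, so that one ``approximation tax'' suffices. The witnessing word $w \in \Free(S)$ would then be chosen as a specific commutator-type element that is trivial in every exact representation of $\Gamma(\mcL)$ of dimension below $2^{\lfloor n/2\rfloor}$ but equals $-\Id$ in the distinguished Clifford representation; combining this with the global stability bound and the dimension lower bound $2^{\lfloor n/2\rfloor}$ for the number of anticommuting involutions forced by $w$ yields $\hlp(w;\delta,\eps) \geq C' \exp\!\bigl(C(\delta/\eps)^{\alpha}\bigr)$.
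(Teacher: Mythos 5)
Your proposal correctly identifies two of the ingredients the paper uses --- the exponential representation-dimension of Clifford algebras on $n$ anticommuting involutions, and Gowers--Hatami stability to convert an approximate representation into an exact one nearby --- and your witnessing word (a central element acting as $-\Id$ in the Clifford representation) is exactly the paper's choice $w=\widetilde{\gamma}(J)$. But the proposal has a genuine structural gap, and it also misdiagnoses where the real difficulty lies.

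The theorem requires a \emph{single} finitely-presented group $\mcG$ and a \emph{single} word $w$ for which the bound holds for \emph{all} $\eps,\delta>0$. Your plan is to engineer one linear constraint system $\mcL$ whose solution group forces $n$ anticommuting involutions, but a fixed $\mcL$ has a fixed finite set of generators and relations, so the anticommutation pattern it encodes is bounded; it cannot by itself force a Clifford structure whose rank $n$ grows without bound as $\eps\to 0$. You never explain how to make $n$ scale with $1/\eps$ inside a fixed finite presentation, and that is precisely the hard part. The paper's solution is to pass through general combinatorial group theory rather than a hand-built constraint system: take the infinite-rank Clifford group $\mcC$ (containing every $\mcC(n)$), make it finitely generated via a semidirect product $\Z\ltimes\mcC$ with the shift automorphism, take an HNN extension with a stable letter $t$ satisfying $tzt^{-1}=z^2$ so that $x_k=z^k x_0 z^{-k}$ admits a word of length $O((\log k)^2)$ rather than $O(k)$, verify that the resulting group $\mcK$ has word problem solvable in polynomial time, and then apply the Birget--Ol'shanskii--Rips--Sapir quantitative Higman embedding to land $\mcK$ in a finitely-presented $\mcG$ with \emph{polynomial} isoperimetric function. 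The polynomial Dehn function is exactly what controls how the error $\eps$ in an approximate representation of $\mcG$ blows up when you restrict to the relations of $\mcC(n)$: one obtains a $C\,n^{2+\kappa}\eps$-homomorphism of $\mcC(n)$, and then choosing $n\sim(\delta/\eps)^{1/(2+\kappa)}$ and invoking the Clifford dimension bound gives the subexponential profile.

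You also identify the ``main obstacle'' as the polynomial loss $\eps\mapsto\eps\cdot\operatorname{poly}(n)$ when iterating local stability, claiming this would ``only yield a polynomial hyperlinear profile.'' That is not correct: the paper's argument incurs exactly such a polynomial loss (Lemma~\ref{L:uniformrep} and the isoperimetric estimate), and it is harmless --- if $\eps'\lesssim n^c\eps$ then taking $n\sim(\delta/\eps)^{1/c}$ already gives dimension $\geq 2^{\Omega((\delta/\eps)^{1/c})}$, which is subexponential. The loss you need to avoid is \emph{exponential} in $n$, which is what would happen if the words encoding $x_k$ had length $\Theta(k)$ or if the Dehn function of $\mcG$ were super-polynomial; this is the reason for the $t$-letter and for using the BORS embedding specifically. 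Finally, note that solution groups and non-local games enter the paper only in Corollary~\ref{C:game}, downstream of the theorem; they are not how the theorem itself is proved.
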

In particular, the hyperlinear profile of $\mcG$ is superpolynomial. In the
proof of Theorem \ref{T:main}, we can take $\alpha$ to be any constant $< 1/2$,
although the value of $C$ will depend on $\alpha$.  Note that we do not give an
upper bound on $\hlp(w;\delta,\eps)$ in Theorem \ref{T:main}. In fact, we do
not even know if the constructed group is hyperlinear (although we have no
reason to suspect that it is not).  

The notion of hyperlinear profile is inspired by the notion of sofic profile
introduced by Cornulier \cite{Co13}, which measures the dimension growth rate
of approximate representations in permutation groups, rather than unitary
groups. In Cornulier's definition, the sofic profile of a word is indexed by an
integer parameter $n$, corresponding to the parameters $\eps = \sqrt{2/n}$ and
$\delta = \sqrt{2-2/n}$ in our definition of hyperlinear profile. For words
which are not non-trivial in finite-dimensional representations, the sofic
profile is always at least linear in $n$. Cornulier has asked whether it is
possible for a group to have a superlinear sofic profile.  As shown in
\cite{SV17}, hyperlinear profile is a lower bound on
sofic profile.  As a result, the group in Theorem \ref{T:main} also has
subexponential sofic profile, answering Cornulier's question:
\begin{cor}\label{C:main}
    There is a finitely-presented group $\mcG$, a finite subset $E \subset
    \mcG$, and constants $\alpha, C,C' > 0$, such that the sofic profile
    $\sigma(E;n) \geq C' \cdot \exp\left(C n^{\alpha}\right)$ for all $n \geq 1$.  
\end{cor}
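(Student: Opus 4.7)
The plan is to derive Corollary \ref{C:main} directly from Theorem \ref{T:main} by invoking the comparison between hyperlinear and sofic profiles established in \cite{SV17}, followed by a change of variables. Let $\mcG$, $w$, $\alpha$, $C$, $C'$ be as in Theorem \ref{T:main}, and let $E \subset \mcG$ be the singleton consisting of the image of $w$ under the quotient $\Free(S) \arr \mcG$; this image is nontrivial by the hypotheses on $w$ (indeed, a word that admits approximate representations with $\phi(w)$ bounded away from $\Id$ cannot be trivial in $\mcG$).

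The key ingredient is the observation from \cite{SV17} that any sofic approximation of degree $n$ induces, via the assignment of permutations to permutation matrices in $\mcU(\C^n)$ with the normalized Frobenius norm, an $\eps$-representation with $\eps = \sqrt{2/n}$ in which every nontrivial element stays at distance at least $\delta = \sqrt{2-2/n}$ from $\Id$. In our notation, this translates to
\begin{equation*}
    \sigma(E; n) \;\geq\; \hlp\bigl(w;\, \sqrt{2 - 2/n},\, \sqrt{2/n}\bigr).
\end{equation*}

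Finally, I would substitute these parameter values into Theorem \ref{T:main}. Since $\delta/\eps = \sqrt{n-1}$, the bound becomes
\begin{equation*}
    \sigma(E;n) \;\geq\; C'\exp\bigl(C(n-1)^{\alpha/2}\bigr),
\end{equation*}
which for $n \geq 2$ dominates $C'\exp\bigl(C\cdot 2^{-\alpha/2}\,n^{\alpha/2}\bigr)$. The finitely many small values of $n$ can be absorbed by shrinking the constant $C'$, and relabelling $\alpha/2$ as the new $\alpha$ (which can be any positive constant less than $1/4$, matching the flexibility in Theorem \ref{T:main}) yields the corollary in the stated form. The argument presents no substantive obstacle, since essentially all of the work is carried by Theorem \ref{T:main}; the only point meriting care is verifying that the translation of parameters from \cite{SV17} moves a lower bound on $\hlp$ into a lower bound on $\sigma$, which it does because larger sofic degree $n$ corresponds to smaller $\eps$ and larger $\delta$.
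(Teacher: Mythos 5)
Your proposal is correct and follows essentially the same route as the paper: both deduce the corollary from Theorem \ref{T:main} by translating the hyperlinear-profile lower bound into a sofic-profile lower bound via the comparison in \cite{SV17}, with the substitution $\eps \sim 1/\sqrt{n}$, yielding $\delta/\eps \sim \sqrt{n}$ and hence the exponent $\alpha/2$. The paper cites \cite[Proposition 7.2]{SV17} directly (with a fixed $\delta < \sqrt{2}$ and $\eps = C_2/\sqrt{n}$ for $n \geq N$), whereas you re-derive the correspondence $(\delta,\eps) = (\sqrt{2-2/n},\sqrt{2/n})$ from permutation matrices; these give the same asymptotics, so the difference is only in bookkeeping.
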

We can also ask about lower bounds in other norms, for instance, on the profile
with respect to the operator norm. The proof of Theorem \ref{T:main} is based
on Gowers and Hatami's stability theorem for approximate representations of
finite groups, which apply for any Schatten $p$-norm (\cite{GH17}, see also
\cite{DOT17}). Finite groups are also known to be stable in the operator norm
by a result of Kazhdan \cite{Ka82}. Consequently, Theorem \ref{T:main} holds
when the normalized Frobenius norm is replaced by the operator norm or another
normalized Schatten $p$-norm. However, for simplicity of presentation we focus
on the normalized Frobenius norm, except to note where the proof needs to be
changed for other norms. 

The bounds on hyperlinear profile in \cite{SV17} are used to show that there is
a non-local game with optimal quantum value $1$, but for which any
$\eps$-optimal strategy requires local Hilbert spaces of dimension
$\Theta(\eps^{1/k})$ (where $k$ is some fixed integer). Every
finitely-presented group embeds in a solution group by \cite[Theorem
3.1]{Sl16}, so Theorem \ref{T:main} can also be used to prove lower bounds on
local Hilbert space dimension in a non-local game: 
\begin{cor}\label{C:game}
    There are constants $C, C', \alpha > 0$ and a two-player non-local game $G$
    with commuting-operator value $1$, such that playing $G$ with success
    probability $1-\eps$ requires the players to use local Hilbert spaces of
    dimension at least $C' \cdot \exp\left(C / \eps^{\alpha} \right)$.
\end{cor}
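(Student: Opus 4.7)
The plan is to combine Theorem \ref{T:main} with the embedding of finitely-presented groups into solution groups from \cite[Theorem 3.1]{Sl16} and the standard translation between $\eps$-optimal strategies for linear system non-local games and approximate representations of the associated solution group, following the template of \cite{SV17}.

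First, I would apply \cite[Theorem 3.1]{Sl16} to the pair $(\mcG, w)$ provided by Theorem \ref{T:main}, producing a linear system $Ax = b$ over $\Z_2$ whose solution group $\Gamma$ contains $\mcG$ as a subgroup, arranged so that the element of $\Gamma$ represented by $w$ equals the distinguished central generator $J$. Since $w$ is non-trivial in $\mcG$, we have $J \neq 1$ in $\Gamma$, so the linear system non-local game $G$ associated to $Ax = b$ has commuting-operator value $1$. The generators of $\mcG$ are expressed as fixed words in the generators of $\Gamma$ by this construction.

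Next, I would translate an $\eps$-optimal quantum strategy for $G$ of local dimension $d$ into an $\eps_1$-representation $\phi \colon \Free(T) \arr \mcU(\C^d)$ of $\Gamma$ with the additional property $\norm{\phi(J) + \Id}_f \leq \eps_2$, where $\eps_1, \eps_2 = O(\sqrt{\eps})$. This is the normalized-Frobenius analog of the standard solution-group-framework argument that $\eps$-optimal strategies give rise to approximate representations; a very similar computation is carried out in \cite[Section 5]{SV17}. Restricting $\phi$ along the embedding $\mcG \incl \Gamma$ and rewriting in the original generating set $S$ of $\mcG$ yields an $\eps_1'$-representation of $\mcG$ with $\eps_1' = O(\sqrt{\eps})$, in which $\norm{\phi(w) - \Id}_f = \norm{\phi(J) - \Id}_f \geq \sqrt{2} - O(\sqrt{\eps})$. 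Applying Theorem \ref{T:main} with $\delta = 1$ (which is valid once $\eps$ is small enough that $\sqrt{2} - O(\sqrt{\eps}) \geq 1$) gives
\begin{equation*}
    d \geq C' \exp\!\left[ C / (\eps_1')^{\alpha} \right] \geq C'' \exp\!\left( C'' / \eps^{\alpha/2} \right),
\end{equation*}
and renaming $\alpha/2$ as $\alpha$ and adjusting constants yields the stated bound.

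The main obstacle is quantifying the loss in parameters at each stage of this translation, and in particular controlling how the embedding $\mcG \incl \Gamma$ affects $\eps$. Concretely, one needs the embedded generators of $\mcG$ to be expressible as words of bounded length in the generators of $\Gamma$, so that restricting an $\eps_1$-representation of $\Gamma$ only inflates the error by a constant factor (and similarly for the lower bound $\norm{\phi(w) - \Id}_f \geq \sqrt 2 - O(\sqrt\eps)$). These word-length estimates are implicit in the explicit construction of \cite[Theorem 3.1]{Sl16}, but need to be extracted. A secondary issue is verifying the strategy-to-representation translation for the normalized Frobenius norm, including the conversion of arbitrary POVMs/projectors in the quantum strategy into honest order-two unitaries, but this is a routine adaptation of the techniques in \cite{SV17} and \cite{Sl16}.
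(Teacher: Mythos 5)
The central difficulty your proposal skips over is that \cite[Theorem 3.1]{Sl16} requires the element being sent to $J_\Gamma$ to be a \emph{central involution} of the group being embedded: $J_\Gamma$ is a distinguished central element of order two in the solution group $\Gamma$, so its preimage under an injective homomorphism must be central in the image and hence in the domain. But the word $w = \widetilde\gamma(J)$ from Theorem \ref{T:main} represents the element $J$, which is not central in $\mcG$: already in $\mcK$, the HNN stable letter $t$ does not commute with $J$, since $J \notin \langle z\rangle$ and the HNN relation only specifies conjugation on $\langle z\rangle$. So you cannot apply \cite[Theorem 3.1]{Sl16} directly to $(\mcG, w)$ as proposed.

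The paper resolves this by interposing an auxiliary group
\[
\mcG' := \langle \mcG, s, J' : [\mcG,J'] = [s,J'] = (J')^2 = 1,\ [s,J]=J' \rangle,
\]
an HNN extension of $\mcG \times \Z_2$ in which $J'$ is a genuine central involution. The key computation is that for any $\eps$-representation $\psi$ of $\mcG'$ one has $\norm{\psi(J') - \Id}_f \leq \eps + 2\norm{\psi(J) - \Id}_f$, whence $\hlp(J';\delta,\eps) \geq \hlp(J;(\delta-\eps)/2,\eps)$; this transfers the lower bound of Theorem \ref{T:main} to $J'$. Only then is \cite[Theorem 3.1]{Sl16} applied, to the pair $(\mcG', J')$. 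Without this device (or something equivalent) your argument does not go through.

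Two smaller points. First, your claimed bound $\norm{\phi(w)-\Id}_f \geq \sqrt 2 - O(\sqrt\eps)$ is inconsistent with your own statement $\norm{\phi(J)+\Id}_f \leq \eps_2$, which gives a bound near $2$; the paper in fact works with $\delta = 2$. Second, the two quantitative translation steps you flag as work-to-do are handled in the paper by citing \cite[Corollary 5.2]{SV17} (which gives a loss of $\eps^{1/4}$, not $\sqrt\eps$) for the strategy-to-representation step, and \cite[Lemma 2.4]{SV17} for the passage from the subgroup $\mcG'$ (with its own generating set) to $\Gamma$. These are the precise ingredients you were looking for, but the centrality issue above is the genuine gap.
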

Note that Corollary \ref{C:game} has a caveat: while the game constructed via
this method will have commuting-operator value $1$, it is not clear if it will
have quantum value $1$. This is in part because it is not clear if the group in
Theorem \ref{T:main} is hyperlinear, and in part because of the limitations of
\cite[Theorem 3.1]{Sl16}. Ji, Leung, and Vidick have recently given a simpler
three-player non-local game with similar subexponential entanglement
requirements, but without this caveat \cite{JLV18}. 

The rest of the paper is split into two parts. In the first part (Section
\ref{S:construction}), we construct the group $\mcG$. In the second part
(Section \ref{S:proof}) we prove Theorem \ref{T:main} and Corollaries
\ref{C:main} and \ref{C:game}.

\subsection{Acknowledgements}

I thank Tobias Fritz, Andreas Thom, and Thomas Vidick for helpful comments. 

\section{The construction}\label{S:construction}

Before proving Theorem \ref{T:main}, we explain how to construct the group
$\mcG$.  To start, let $\Cl(n)$ denote the Clifford algebra of rank $n$, i.e.
\begin{align*}
    \Cl(n) = \C \langle x_1,\ldots,x_n : x_i^2 = 1, x_i x_j = - x_j x_i \text{ for all }
        1 \leq i \neq j \leq n \rangle.
\end{align*}
To think about $\Cl(n)$ in the language of groups, we can represent $-1$ by a 
central involution $J$, and look at the group
\begin{align*}
    \mcC(n) = \langle J, x_1,\ldots,x_n\; :\ &J^2 = 1, x_i^2 = [x_i,J] = 1 \text{ for all }
        1 \leq i \leq n, \\
        & [x_i,x_j] = J \text{ for all } 1 \leq i \neq j \leq n \rangle.
\end{align*}
The algebra $\Cl(n)$ is then the quotient of the group algebra of $\mcC(n)$ by
the relation $J=-1$. It is well-known that $\Cl(n)$ has either one or two
irreducible representations of dimension $2^{\lfloor n/2 \rfloor}$, and as we
will see in the next section, it is possible to lower bound the dimension of
approximate representations as well. To prove Theorem \ref{T:main}, we need
to combine the groups $\mcC(n)$ into one algebraic object. This can be done
by considering the group
\begin{align*}
    \mcC = \langle J, x_i, i \in \Z \; :\ & J^2 = 1, x^2_i = [x_i,J] = 1 \text{ for all } i \in \Z, \\
        & [x_i, x_j] = J \text{ for all } i, j \in \Z \text{ with } i\neq j\rangle
\end{align*}
corresponding to the infinite-rank Clifford algebra. 
Note that the word problem in $\mcC$ is easily solvable, as every element has a
normal form $J^a x_{i_1} \cdots x_{i_k}$ for $a \in \Z_2$ and $i_1 < \ldots <
i_k$. Another consequence of this normal form is that the subgroup $\langle
x_1, \ldots, x_n \rangle$ in $\mcC$ can be identified with $\mcC(n)$. 

Indexing the generators of $\mcC$ by $\Z$ is convenient for the following
reason: any bijection $\sigma : \Z \arr \Z$ gives an automorphism of $\mcC$
sending $x_i \mapsto x_{\sigma(i)}$ and $J \mapsto J$. Let $\sigma(n) = n+1$,
and let $\mcK_0$ be the semidirect product $\Z \ltimes_{\sigma} \mcC$. If $z$
is the generator of the $\Z$ factor, then $x_i = z^i x_0 z^{-i}$, so $\mcK_0$
is finitely generated by $z$ and $x_0$ (although we will always include $J$ in
the list of generators for convenience). 
Finally, we let 
\begin{equation*}
    \mcK = \langle \mcK_0, t : t z t^{-1} = z^2 \rangle
\end{equation*}
be the HNN extension of $\mcK_0$ by the endomorphism of $\langle z \rangle \iso
\Z$ sending $z \arr z^2$. We regard $\mcK$ as a finitely-generated group with
generator set $\{J,t,x_0,z\}$. 

We want to embed $\mcK$ in a finitely presented group $\mcG =
\langle S : R \rangle$.  Since $\mcK$ is recursively presented, this is
possible by Higman's embedding theorem \cite{Hi61}. However, to work with
approximate representations, we want an embedding where the defining relations
of $\mcK$ can be inferred from the defining relations $R$ of
$\mcG$ in an efficient manner. Thus we embed $\mcK$ in a finitely-presented
group using a quantitative version of Higman's theorem due to Birget,
Ol'shanskii, Rips, and Sapir \cite{BORS02}.  To state this theorem, we recall
the notion of an isoperimetric function. Let $G = \langle S : R \rangle$ be a
finitely-presented group. In elementary terms, a word $w \in \mcF(S)$ is
trivial in $G$ if and only if there is a sequence $w = w_0 \arr w_1 \arr \cdots
\arr w_k = 1$ of words in $\mcF(S)$ such that $w_{i}$ differs from $w_{i-1}$ by
the application of a relation in $R$ for all $i=1,\ldots,k$, i.e. we can get
$w_{i}$ from $w_{i-1}$ by replacing a subword $b$ of $w_{i-1}$ with $a^{-1}
c^{-1}$, where $abc = 1$ is some relation in $R$, and then reducing the
resulting word in $\mcF(S)$.  The \emph{area of $w$} is the smallest $k$ for
which such a sequence exists. A function $f : \mbN \arr \mbN$ is an
\emph{isoperimetric function for $G$} if the area of $w$ is $\leq f(n)$ for all
words $w \in \mcF(S)$ of length $\leq n$ which are trivial in $G$. 

\begin{thm}[\cite{BORS02}]\label{T:BORS}
    Let $K$ be a finitely-generated group with word problem solvable in
    (non-deterministic) time $\leq T(n)$, where $T(n)^4$ is at least
    superadditive, i.e. $T(m+n)^4 \geq T(m)^4 + T(n)^4$. Then $K$ embeds
    into a finitely-presented group $G$ with isoperimetric function 
    $n^2 T(n^2)^4$.
\end{thm}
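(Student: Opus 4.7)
The plan is to realize $K$ as a subgroup of a finitely-presented group built from a computational device that recognizes the word problem of $K$. The first step is to replace the given non-deterministic Turing machine $M$ accepting the word problem in time $T(n)$ by an equivalent S-machine $\mcS$ in the sense of Sapir. An S-machine is a finite sequence of semigroup rewriting rules acting on words whose letters are partitioned into tape letters and state letters; the crucial feature is that each rule commutes with the letters it does not touch, so its behavior translates cleanly into group-theoretic relators via commutators. One knows from the machinery of \cite{BORS02} that any non-deterministic time $T(n)$ Turing machine can be simulated by an S-machine of time complexity $O(T(n)^4)$, the fourth power reflecting the overhead of simulating a single tape head using S-rules.

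Next I would associate to $\mcS$ a finitely-presented group $G_{\mcS}$ whose generators are the tape and state letters together with auxiliary hub and connector symbols, and whose relators encode (i) the S-rules as commutator-type identities, (ii) the boundary conditions for accepting computations via a hub relator, and (iii) the free behavior of tape letters away from the state head. The upshot is a tight correspondence between van Kampen diagrams over $G_{\mcS}$ and accepting computations of $\mcS$: every trivial word of length $n$ arising from an input to the machine bounds a disk diagram whose area is essentially the number of computation steps needed to verify acceptance. To embed $K$ itself, one takes an amalgamated product (or HNN extension) of $G_{\mcS}$ with a free group on the generators of $K$, identifying the generators of $K$ with their machine-input encodings; Britton's lemma together with the fact that $\mcS$ faithfully recognizes the word problem of $K$ then imply that $K$ sits inside the resulting finitely-presented group $G$ as a subgroup.

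The hardest step is the isoperimetric estimate. One decomposes any reduced van Kampen diagram over $G$ into \emph{bands} of cells sharing a common tape or state letter along matched edges, shows that these bands cannot cross pathologically (no two bands of the same type intersect transversally in a reduced diagram), and then bounds the total number of cells by the product of the number of bands and their lengths. Each band corresponds either to the trajectory of a fixed tape position or to the state head during a simulated computation, so the count is at most $O(n^2 \cdot T(n^2)^4)$: the $n^2$ factor absorbs the initial encoding of a length-$n$ input into a tape of length $O(n^2)$, and $T(n^2)^4$ is the S-machine time for inputs of that length. The superadditivity hypothesis on $T(n)^4$ is what guarantees that combining subcomputations does not blow up the estimate. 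The principal obstacle throughout is ruling out the pathological configurations (annuli, nested or crossing bands) that would otherwise make the area unbounded; this combinatorial analysis is the technical heart of \cite{BORS02}, and I would expect to follow their arguments rather than attempt to re-derive them.
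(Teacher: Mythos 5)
This theorem is not proved in the paper; it is quoted verbatim from Birget--Ol'shanskii--Rips--Sapir \cite{BORS02}, so there is no ``paper's proof'' to compare against. Your sketch does follow the actual architecture of the BORS argument: convert the Turing machine to an $S$-machine, build a finitely presented group from the $S$-machine by encoding the rewriting rules as commutator-type relators together with hub relators, realize $K$ as a subgroup via an HNN/amalgam construction verified by Britton's lemma, and estimate the Dehn function by a band (trapezoid/disc) analysis of reduced van Kampen diagrams. As a summary of the cited reference this is a fair account.

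One place where your bookkeeping is off is the distribution of the exponent $4$. In BORS the Turing-to-$S$-machine simulation does not itself cost a fourth power of the time; rather, the $S$-machine has time function roughly quadratic in $T$, and the Dehn function of the group associated to an $S$-machine is then roughly the square of the $S$-machine time (area of a computation trapezoid is governed by time $\times$ space, with space bounded by time). Multiplying these two quadratic losses gives the $T(n^2)^4$; your attribution of the whole fourth power to the $S$-machine simulation, with the Dehn function then linear in $S$-machine time, is not how the exponents actually accumulate in \cite{BORS02}. You also gloss over the role of the superadditivity hypothesis, which is needed so that $T$ can be assumed to be a ``proper'' time function (so that one can compose and pad computations without changing the asymptotics), not merely to ``combine subcomputations.'' Since you explicitly defer the hard diagram combinatorics to \cite{BORS02}, these inaccuracies do not constitute a fatal gap, but they should be corrected if the sketch is to be taken as a faithful description of the cited proof.
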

A Turing machine $M$ solves the word problem for a group $K$ with respect to a
finite generating set $S$ if the input alphabet of $M$ is $\{s, s^{-1} : s \in
S \}$ and $M$ accepts a word $w$ if and only if $w$ is trivial in $K$. Saying
that the word problem for $K$ is solvable in time $\leq T(n)$ means that there
is a Turing machine $M$ solving the word problem for $K$, such that for all $n
\geq 1$, if $w$ is a word of length $\leq n$ which is trivial in $K$, then $M$
accepts $w$ in at most $T(n)$ steps.\footnote{In other words, the
non-deterministic time complexity of $M$ is $\leq T(n)$. For deterministic
machines, this is a priori weaker than saying that the deterministic time
complexity is $\leq T(n)$, since we don't require $M$ to halt in time $\leq T(n)$
for words which are non-trivial in $K$. However, the distinction will be
irrelevant in our application of Theorem \ref{T:BORS}.}
\begin{lemma}\label{L:wordproblem}
    The word problem for $\mcK$ with respect to the generating set $\{J,t,x_0,z\}$
    can solved in $O(n^3)$ time (on a multi-tape deterministic Turing machine). 
\end{lemma}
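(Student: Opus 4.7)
The plan is to design a deterministic algorithm based on Britton's Lemma for the HNN extension $\mcK = \langle \mcK_0, t : tzt^{-1} = z^2 \rangle$, using explicit normal forms in $\mcC$ and in $\mcK_0 = \Z \ltimes_\sigma \mcC$. I would store an element of $\mcC$ in the normal form $J^a x_{i_1} \cdots x_{i_k}$ with $i_1 < \cdots < i_k$, and an element of $\mcK_0$ as a pair $(c,m)$ representing $c \cdot z^m$ with $c \in \mcC$ in normal form and $m \in \Z$.

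Given an input word $w$ of length $n$ over $\{J, t, x_0, z\}^{\pm 1}$, the first phase would split $w$ at every occurrence of $t^{\pm 1}$ into syllables $w_0, w_1, \ldots, w_s$ over $\{J, x_0, z\}^{\pm 1}$, and normalize each syllable in $\mcK_0$. To normalize one syllable I scan left to right maintaining $(c, m)$: each $z^{\pm 1}$ updates $m$ by $\pm 1$, each $J^{\pm 1}$ toggles the scalar bit, and each $x_0$ is equivalent to appending $x_m$ to $c$ (using $z^m x_0 = x_m z^m$). This produces a sequence of indices in $[-n, n]$ to sort; bubble sort costs $O(\ell^2 \log n)$ bit operations for a syllable of length $\ell$, so the first phase totals $O(n^2 \log n)$ across all syllables.

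The second phase iteratively performs Britton reductions. A pinch is a subword $t^{\epsilon} u_i t^{-\epsilon}$ where the normalized syllable $u_i = (c_i, m_i)$ satisfies $c_i = 1$ and, if $\epsilon = -1$, also $m_i$ is even; it reduces to $z^{2m_i}$ if $\epsilon = 1$ and to $z^{m_i/2}$ if $\epsilon = -1$. Each such reduction collapses three adjacent syllables $u_{i-1}, u_i^{\text{reduced}}, u_{i+1}$ into a single $\mcK_0$-normal form by shifting all indices in $c_{i+1}$ by the appropriate integer (corresponding to the conjugation $z^M c_{i+1} z^{-M} = \sigma^M(c_{i+1})$) and then merging two sorted index lists while tracking inversion parity as a factor of $J$ and canceling repeated indices. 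After each reduction I rescan the merged neighborhood for newly exposed pinches and repeat.

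The main obstacle is controlling the bit-length blow-up of the numerical data. The pinch $tz^m t^{-1} = z^{2m}$ doubles exponents, but with initial $|m| \leq n$ and at most $n/2$ total pinches every $z$-exponent and every shifted $x$-index stays bounded by $2^{O(n)}$ in absolute value, hence has bit length $O(n)$. Each pinch reduction then consists of a handful of $O(n)$-bit arithmetic operations and a merge of two sorted lists of total length at most $n$ with each comparison costing $O(n)$ bit operations, giving $O(n^2)$ per pinch and $O(n^3)$ across the $\leq n/2$ pinches. When no more pinches remain, Britton's Lemma implies that $w = 1$ in $\mcK$ iff a single syllable survives with $c = 1$ and $m = 0$, which is a $O(1)$ check. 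Combining the two phases, the total running time on a multi-tape deterministic Turing machine is $O(n^3)$.
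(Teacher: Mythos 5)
Your proof is correct and follows essentially the same strategy as the paper's: store $\mcK_0$-elements in a normal form with sorted Clifford indices and a $z$-exponent stored in binary, normalize the $t$-free syllables, iteratively reduce Britton pinches while merging the flanking syllables and shifting indices by the accumulated $z$-power, and bound the running time by tracking that the bit length of all integers stays $O(n)$ through the $O(n)$ pinch reductions, giving $O(n^2)$ per reduction and $O(n^3)$ overall. The only cosmetic differences are your use of a $c\cdot z^m$ convention (versus the paper's $z^a J^b x_{i_1}\cdots x_{i_\ell}$) and bubble sort rather than merge sort in the first phase, both of which are harmless since the first phase is not the bottleneck.
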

Lemma \ref{L:wordproblem} is fairly intuitive; however, there are some
subtleties to representing group elements on a Turing machine, so we give a
detailed proof.
\begin{proof}[Proof of Lemma \ref{L:wordproblem}]
    As previously mentioned, any element of $\mcC$ can be written uniquely as
    $J^b x_{i_1} \cdots x_{i_{\ell}}$, where $b \in \Z_2$ and $i_1 < \ldots <
    i_{\ell}$.  As a
    semidirect product, any element of $\mcK_0$ thus has a unique normal form
    $z^a J^b x_{i_1} \cdots x_{i_{\ell}}$, where $a \in \Z$, $b \in \Z_2$, and
    $i_1 < \ldots < i_{\ell}$. On a Turing machine, a signed integer $a$ can be
    represented as a string of bits, with the first bit representing the sign.
    The bit length of $a$ is the number of bits needed to represent $a$, i.e.
    $\lceil \log_2 |a|\rceil + 2$ (assuming $a \neq 0$).  The normal form $z^a
    J^b x_{i_1} \cdots x_{i_{\ell}}$ can be represented by the list of numbers
    $a,b,i_1,\ldots, i_{\ell}$, with the different values separated by empty
    cells. We say that $z^a J^b x_{i_1} \cdots x_{i_{\ell}}$ has list-length 
    $\ell$, and maximum bit length given by the maximum bit length of 
    $a,i_1,\ldots,i_{\ell}$ (the bit length of $b$ is just $1$). Thus a
    normal form with list-length $\ell$ and maximum bit length $B$ can be
    represented in at most $(\ell+1)B+2$ cells.  Note that this is different
    from the representation of input words in the word problem, where, for
    instance, the number of cells used to represent $z^a$ is $a$ rather than
    $\lceil \log_2 |a| \rceil +4$. 

    Suppose we have two normal forms $w_1 = z^{a_1} J^{b_1} x_{i_1} \cdots x_{i_{\ell_1}}$
    and $w_2 = z^{a_2} J^{b_2} x_{j_1} \cdots x_{j_{\ell_2}}$
    for elements of $\mcK_0$, represented as described above, and want to find the normal form of $w_1 \cdot
    w_2$. Assume first that $a_1 = a_2=0$ and $b_1 = b_2 = 0$, so that we just
    need to find the normal form $J^b x_{k_1} \cdots x_{k_{\ell_3}}$ of
    $x_{i_1} \cdots x_{i_{\ell_1}} \cdot x_{j_1} \cdots x_{j_{\ell_2}}$.  Since
    the lists $i_1,\ldots,i_{\ell_1}$ and $j_1,\ldots,j_{\ell_2}$ are ordered,
    the list $k_1,\ldots,k_{\ell_3}$ can be found by \emph{merging} the two input lists.
    Merging means that we perform an iterative algorithm, where in each step we
    compare the first two elements of each input list, removing the smaller of
    the two from its list and adding it to the end of an output list. If the
    two elements are equal, we remove both and add nothing to the output. The
    process then continues with the remaining input list elements until no
    elements remain in either list, at which point the output reads
    $k_1,\ldots,k_{\ell_3}$. We can keep track of the power $b$ of $J$ during this
    process by also tracking the parity $p \in \Z_2$ of the number of remaining
    elements in the first input list. So at the beginning, $b=0$ and $p =
    \ell_1$, whenever an element is removed from the first input list we add
    $1$ to $p$, and whenever an element is removed from the second input list
    we add $p$ to $b$.  Comparison of two integers of bit length $B$ can be
    done in time $O(B)$, so if maximum bit length of $w_1$ and $w_2$ is $\leq
    B$, then this entire process takes time $O((\ell_1 + \ell_2)B)$. For general
    $a_1,a_2,b_1,b_2$,
    \begin{equation*}
        w_1 \cdot w_2 = z^{a_1 + a_2} J^{b_1 + b_2} x_{i_1 - a_2} \cdots
            x_{i_{\ell_1}-a_2} \cdot x_{j_1} \cdots x_{j_{\ell_2}}.
    \end{equation*}
    Addition of signed integers of bit length $B$ also takes time $O(B)$, and
    the sum will have bit length at most $B+1$. So the normal form of $w_1
    \cdot w_2$ can be found in time $O((\ell_1+\ell_2)B)$, and will have
    list length $\leq \ell_1 + \ell_2$ and maximum bit length $\leq B+1$. 
    
    Next we look at the problem of finding the normal form of an element of
    $\mcK_0$ given as a word $w$ over $\{J^{\pm 1}, x_0^{\pm 1}, z^{\pm 1}\}$.
    Suppose $w$ has length $n$. By collecting powers of $J$ and $z$, we can
    find a $b \in \Z_2$ and a list of signed integers $a_0,\ldots,a_m$ such
    that $w = J^b z^{a_0} x_0 z^{a_1} x_0 \cdots x_0 z^{a_m}$ in $\mcK_0$. This
    takes time $O(n \cdot B)$, where $B$ is the maximum bit length of the
    integers $a_0,\ldots,a_m$. Then $w = z^{k_0} J^b x_{-k_1} \cdots x_{-k_m}$
    in $\mcK_0$, where $k_j = \sum_{i=j}^{m} a_i$ for each $j=0,\ldots,m$. The
    list $k_0,k_1,\ldots,k_m$ can be found in time $O(n \cdot B')$, where
    $B'$ is the maximum bit length of the integers $a_i,k_i$, $i=0,\ldots,m$.
    The normal form of $w$ can then be found by sorting the list $-k_1,
    \ldots,-k_m$ and keeping track of the power of $J$. If we use merge sort
    with the merge procedure mentioned above, then this takes time
    $O(m\log(m)\cdot B')$. Since $m$ and all the integers $a_i,k_i$,
    $i=0,\ldots,m$ are bounded by $n$, we conclude that the normal form of $w$
    can be found in time $O(n (\log n)^2)$, and will have list length $\leq n$
    and maximum bit length $O(\log(n))$. 

    Finally, suppose we are given a word $w$ in $\{J^{\pm 1}, t^{\pm 1},
    x_0^{\pm 1}, z^{\pm 1}\}$ of total length $n$, in which $t^{\pm 1}$ occurs
    $k$ times. We can write $w$ as $w_0 t^{c_1} w_1 t^{c_2} \cdots t^{c_k}
    w_k$, where $c_1,\ldots,c_k \in \{\pm 1\}$, and $w_0,\ldots,w_k$ are words
    over $\{J^{\pm 1}, x_0^{\pm 1}, z^{\pm 1}\}$. Using the method of the last
    paragraph, we can find the normal form $w_i'$ of $w_i$ for each
    $i=0,\ldots,k$, in total time $O(n (\log n)^2)$. This gives us a sequence
    of integers $c_1,\ldots,c_k \in \{\pm 1\}$ and normal forms $w_0',\ldots,w_k'$
    such that $k \leq n$, $w = w_0' t^{c_1} w_1' \cdots t^{c_k} w_k'$ in
    $\mcK$,the sum of the list lengths of $w_0',\ldots,w_k'$ is $\leq n$, and
    the maximum bit lengths of $w_0',\ldots,w_k'$ are $\leq B = O(\log(n))$. By
    Britton's lemma, if $w$ is trivial in $\mcK$, then either $k=0$ and the normal form
    $w_0'$ is trivial, or there is an index $i \in \{1,\ldots,k-1\}$ such that either
    (1) $c_{i} = 1$, $c_{i+1} = -1$, and $w_i' \in \langle z \rangle$, or (2)
    $c_{i} = -1$, $c_{i+1} = 1$, and $w_i' \in \langle z^2 \rangle$. The only
    normal forms which belong to $\langle z \rangle$ (resp. $\langle z^{2}
    \rangle$) are those of the form $z^a$ (resp. $z^{2a}$) for some $a \in \Z$,
    so if an index $i$ of type (1) or (2) exists, we can find it in $O(n B)$ time.
    Suppose we are given an index $i$ of type (1), and let $w_i = z^a$. Then $t
    z^a t^{-1}$ is equal to $z^{2a}$ in $\mcK$, so if $w''$ is the normal form of
    $w_{i-1} z^{2a} w_{i+1}$, then the word $w_0' t^{c_1} w_1' \cdots w_{i-2}'
    t^{c_{i-1}} w'' t^{c_{i+2}} w_{i+2}' \cdots w_{k}'$ is equivalent to $w$ in $\mcK$.
    In this new word, $t$ and $t^{-1}$ occur $k-2$ times, and the sum of the
    list lengths of $w_0',\ldots,w'',\ldots,w_k'$ is no more than the sum of
    the list lengths of $w_0',\ldots,w_k'$, so both quantities are bounded by
    $n$. However, the maximum bit length of $z^{2a}$ is one more than the
    maximum bit length of $w_i$, and the bit length can go up again when
    calculating the normal form of $w_{i-1} z^{2a} w_{i+1}$. But, the normal
    form $w''$ can still be found in $O(n B)$ time, and the maximum bit lengths
    of $w_0',\ldots,w'',\ldots,w_k'$ will be $\leq B+3$. Given an index of type
    (2), we can do the same thing, but replacing $w_i = z^{2a}$ with $z^a$. If
    we repeat this process, we will conclude after at most $k/2 = O(n)$ steps
    that either $w$ is trivial in $\mcK$, or $w$ is non-trivial by Britton's
    lemma. Since the maximum bit lengths $B$ increase by at most three in each
    step, we have $B = O(n)$ throughout this process, so each step takes at
    worst $O(n^2)$ time, and we can determine whether $w$ is trivial in
    $O(n^3)$ time.
\end{proof}
\begin{rmk}
    While this proof includes some overestimates, the maximum bit length can
    indeed reach order $O(n)$, since $t^k z t^{-k} = z^{2^k}$ in $\mcK$. 
    As we will see, this is the point of including $t$ in the construction
    of $\mcK$. For instance, including $t$ in $\mcK$ allows us to represent
    $x_{2^k}$ by $t^k z t^{-k} x_0 t^k z^{-1} t^{-k}$, a word of length $O(k)$,
    rather than by $z^{2^k} x_0 z^{-2^k}$, a word of length $O(2^k)$. 
\end{rmk}
Applying Theorem \ref{T:BORS} to $\mcK$, we get the group $\mcG$ used in
Theorem \ref{T:main}.
\begin{cor}\label{C:construction}
    There is an embedding of $\mcK$ in a finitely-presented group $\mcG$ with
    isoperimetric function $C n^{26}$ for some constant $C$.
\end{cor}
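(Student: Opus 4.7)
The plan is to apply Theorem \ref{T:BORS} directly to $\mcK$, feeding in the time bound supplied by Lemma \ref{L:wordproblem}. There is essentially no new content beyond checking that the hypotheses of Theorem \ref{T:BORS} are satisfied and computing the resulting exponent.

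First, Lemma \ref{L:wordproblem} gives a deterministic multi-tape Turing machine solving the word problem of $\mcK$ with respect to $\{J,t,x_0,z\}$ in time $\leq C_0 n^3$ for some constant $C_0$. In particular, this bounds the non-deterministic time complexity, so we may set $T(n) := C_0 n^3$ in the statement of Theorem \ref{T:BORS}.

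Next, I would verify the mild superadditivity hypothesis $T(m+n)^4 \geq T(m)^4 + T(n)^4$. Since $T(n)^4 = C_0^4\, n^{12}$, this reduces to the elementary inequality $(m+n)^{12} \geq m^{12} + n^{12}$ for positive integers $m,n$, which follows immediately from the binomial expansion (all cross terms are non-negative). Applying Theorem \ref{T:BORS} then yields an embedding of $\mcK$ into a finitely-presented group $\mcG$ with isoperimetric function
\begin{equation*}
    n^2\, T(n^2)^4 \;=\; n^2 \cdot C_0^4\, (n^2)^{12} \;=\; C_0^4 \cdot n^{26},
\end{equation*}
which is exactly the bound claimed with $C = C_0^4$.

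There is no real obstacle here beyond the two already-established ingredients (the cubic-time word problem and the BORS embedding theorem); the corollary is a direct substitution. The only minor care needed is to make sure that the cubic time bound from Lemma \ref{L:wordproblem} is genuinely a bound on the (non-deterministic) time complexity of accepting trivial words, which is immediate since the machine in Lemma \ref{L:wordproblem} is deterministic and halts within the stated time on every input.
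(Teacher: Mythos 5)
Your proof is correct and matches the paper's (implicit) argument exactly: the paper also obtains Corollary \ref{C:construction} by plugging the $O(n^3)$ bound from Lemma \ref{L:wordproblem} into Theorem \ref{T:BORS}, which yields the isoperimetric function $n^2\,T(n^2)^4 = O(n^{26})$. The superadditivity check you carry out is the right one, and the observation that a deterministic time bound suffices for the non-deterministic hypothesis is the same point made in the paper's footnote.
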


\section{Proof of Theorem \ref{T:main}}\label{S:proof}

To prove Theorem \ref{T:main}, we use the fact that any approximate
representation of $\Cl(n)$ must have dimension near $2^{\lfloor n/2 \rfloor}$.
To state this fact in the form that we will use, we need some additional
terminology. An \emph{$\eps$-homomorphism from a group $G$ to $\mcU(\C^n)$} is
a function $\psi : G \arr \mcU(\C^d)$ such that $\norm{\psi(gh) -
\psi(g)\psi(h)}_f \leq \eps$ for all $g,h \in G$. Note that for infinite
groups, this is a much stronger notion of approximate representation than the
notion introduced in the introduction.
\begin{lemma}\label{L:clifford}
    If $\psi$ is an $\eps$-homomorphism from $\mcC(n)$ to $\mcU(d)$ such that
    \begin{equation*}
        \norm{\psi(J) - \Id}_f > 42\eps, 
    \end{equation*}
    then $d \geq 2^{\lfloor n/2 \rfloor - 1}$. 
\end{lemma}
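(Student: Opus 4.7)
My plan is to apply the Gowers--Hatami stability theorem \cite{GH17} for $\eps$-homomorphisms of finite groups to $\psi$, viewing $\mcC(n)$ as a finite group of order $2^{n+1}$. This produces a finite-dimensional Hilbert space $K$, an honest unitary representation $\rho : \mcC(n) \to \mcU(K)$, and an isometry $V : \C^d \to K$ satisfying
\begin{equation*}
    \norm{\psi(g) - V^* \rho(g) V}_f \leq c \eps \quad \text{for all } g \in \mcC(n),
\end{equation*}
where $c$ is a small absolute constant.

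Next, I use the fact that $J$ is a central involution in $\mcC(n)$ to decompose the representation. Since $\rho(J)$ is a self-adjoint unitary, $K$ splits orthogonally as $K_+ \oplus K_-$ with $\rho(J) = P_+ - P_-$, where $P_\pm$ are the projections onto the $\pm 1$-eigenspaces. The subrepresentation $\rho|_{K_-}$ factors through $\Cl(n) = \C[\mcC(n)]/(J + 1)$, so because every irreducible representation of $\Cl(n)$ has dimension $2^{\lfloor n/2 \rfloor}$, the dimension $\dim K_-$ is either zero or a positive multiple of $2^{\lfloor n/2 \rfloor}$.

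I then quantify that $K_-$ must be nontrivial. Writing $V^*\rho(J)V = \Id_d - 2 V^* P_- V$ and applying the triangle inequality with the hypothesis $\norm{\psi(J) - \Id}_f > 42\eps$ gives
\begin{equation*}
    2\norm{V^* P_- V}_f \;\geq\; \norm{\psi(J) - \Id}_f - \norm{\psi(J) - V^*\rho(J) V}_f \;>\; (42 - c)\eps,
\end{equation*}
so in particular $V^* P_- V \neq 0$, hence $\dim K_- \geq 2^{\lfloor n/2 \rfloor}$.

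The final and most delicate step, and the one where I expect the constant $42$ in the hypothesis to be calibrated, is converting this lower bound on $\dim K_-$ into a lower bound on $d$. The idea is to invoke Gowers--Hatami in a form where $\dim K$ is controlled by $d$; concretely, by iteratively discarding isotypic components of $\rho$ whose overlap with $V(\C^d)$ is small, one can arrange $\dim K \leq 2d$ while keeping the approximation bound of the same order. Then $\dim K_- \leq \dim K \leq 2d$ together with $\dim K_- \geq 2^{\lfloor n/2 \rfloor}$ yields $d \geq 2^{\lfloor n/2 \rfloor - 1}$. The slack provided by the large constant $42$ in the hypothesis is precisely what absorbs the constant-factor losses in this dimension-reduction step and in the triangle-inequality bound above.
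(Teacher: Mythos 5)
Your proof follows essentially the same route as the paper: apply Gowers--Hatami to get a nearby exact representation, observe that $\rho(J)$ has a nontrivial $(-1)$-eigenspace via the triangle inequality, note that the restriction to that eigenspace is a $\Cl(n)$-representation and hence has dimension at least $2^{\lfloor n/2 \rfloor}$, and combine with $\dim K \leq 2d$. The one thing to tidy up is your ``iteratively discarding isotypic components'' step: no such dimension-reduction procedure is needed, because the version of Gowers--Hatami the paper cites (\cite{GH17}, in the formulation of \cite{DOT17}) already outputs $d \leq \dim K \leq (1-4\eps^2)^{-1} d \leq 2d$ as part of its conclusion (with $c = 42$), and the hypothesis $\norm{\psi(J)-\Id}_f > 42\eps$ forces $\eps < 1/21 < 1/16$ so that the theorem applies.
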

Other variants of Lemma \ref{L:clifford} have been used previously in quantum
information (for instance, see \cite{ostrev2016entanglement}). This version of the result follows
easily from the Gowers-Hatami stability theorem.  We give a complete proof for
the convenience of the reader. 
\begin{theorem}[\cite{GH17}, see \cite{DOT17} for this exact formulation]\label{T:gowershatami}
    Let $G$ be a finite group. If $\psi$ is an $\eps$-homomorphism from $G$
    to $\mcU(\C^d)$, where $\eps < 1/16$, then there is an exact representation
    $\gamma : G \arr \mcU(\C^m)$ for some $d \leq m \leq (1-4\eps^2)^{-1} d$, as
    well as an isometry $U : \C^d \arr \C^m$, such that 
    \begin{equation*}
        \norm{\psi(g) - U^* \gamma(g) U}_f \leq 42 \eps
    \end{equation*}
    for all $g \in G$. 
\end{theorem}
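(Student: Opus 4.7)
The plan is to follow the Gowers--Hatami Fourier-analytic strategy on the finite group $G$. For each irreducible unitary representation $\rho : G \arr \mcU(V_\rho)$ of dimension $d_\rho$, I would introduce the Fourier coefficient
\[
A_\rho := \mbE_{g \in G}\, \psi(g) \otimes \rho(g)^* \;\in\; \mathrm{End}(\C^d \otimes V_\rho).
\]
Schur orthogonality implies that if $\psi$ were an exact representation, then $d_\rho A_\rho$ would be the orthogonal projection of $\C^d \otimes V_\rho$ onto a subspace carrying a multiple of $\rho$. The plan is to show that for an $\eps$-homomorphism the $A_\rho$ are close to being proportional to orthogonal projections, to round each $A_\rho$ to an honest partial isometry $W_\rho$, and to assemble the $W_\rho$ into a single isometry $U$ whose range carries $\gamma := \bigoplus_\rho \rho^{\oplus m_\rho}$ for suitable multiplicities $m_\rho$.

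The first step is to bound the defect of $A_\rho$ from satisfying $d_\rho A_\rho^* A_\rho = d_\rho A_\rho$. A direct computation expands $A_\rho^* A_\rho$ as a double average, and after a change of variables the difference becomes
\[
A_\rho^* A_\rho - A_\rho \;=\; \mbE_{k}\left(\mbE_{g}\big(\psi(g)^* \psi(gk) - \psi(k)\big)\right) \otimes \rho(k)^*.
\]
The $\eps$-homomorphism condition together with unitarity of $\psi(g)$ controls the inner expectation by $\eps$ pointwise in $k$. Summing over $\rho$ weighted by $d_\rho$ and applying the Plancherel identity for $M_d(\C)$-valued functions on $G$ packages these bounds as a global $\ell^2$ estimate of order $\eps^2$ on the defect. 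The same Plancherel identity, together with unitarity of $\psi(g)$, yields the normalization $\sum_\rho d_\rho \tr(A_\rho^* A_\rho) = d$, which is the key input for the dimension bound.

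The second step is rounding. I would let $P_\rho$ be the spectral projection of $d_\rho A_\rho^* A_\rho$ onto eigenvalues $\geq 1/2$, set $m_\rho := \tr(P_\rho)/d_\rho$, and take $W_\rho$ to be the partial isometry from the polar decomposition of $d_\rho A_\rho P_\rho$. The hypothesis $\eps < 1/16$ and the defect bound from Step 1 force the spectrum of $d_\rho A_\rho^* A_\rho$ to cluster near $0$ and $1$, so the cut at $1/2$ is unambiguous and the kept eigenvalues lie in $[1 - 4\eps^2, 1]$. Combined with the normalization $\sum_\rho d_\rho \tr(A_\rho^* A_\rho) = d$, this yields the dimension bound: setting $m := \sum_\rho d_\rho m_\rho$, we have $d \leq m \leq (1 - 4\eps^2)^{-1} d$.

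Finally, the partial isometries $W_\rho$ assemble into an isometry $U : \C^d \arr \C^m$, and the range carries $\gamma := \bigoplus_\rho \rho^{\oplus m_\rho}$ by construction. The pointwise approximation $\norm{\psi(g) - U^* \gamma(g) U}_f \leq 42\eps$ then follows from the Fourier inversion identity writing $\psi(g)$ as a $d_\rho$-weighted sum of partial traces of $(\Id \otimes \rho(g)) A_\rho$ over $V_\rho$, replacing each $A_\rho$ by its rounded counterpart, and controlling the resulting error by Cauchy--Schwarz against the $\ell^2$-defect bound from Step 1. The hard part will be tracking constants tightly enough to land exactly $42\eps$ and $(1-4\eps^2)^{-1} d$, and in particular choosing the rounding in Step 2 so that a single $W_\rho$ simultaneously gives a partial isometry, the correct intertwining with $\rho$ on its range, and a small $\norm{\cdot}_f$-deviation from $d_\rho A_\rho$; I expect this bookkeeping to closely mirror \cite{GH17} and \cite{DOT17}.
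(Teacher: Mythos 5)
This theorem is not proved in the paper at all: it is imported as a black box from \cite{GH17}, in the exact formulation of \cite{DOT17}, so there is no in-paper argument to compare your sketch against. What you have written is a reasonable outline of the actual Gowers--Hatami proof (Fourier coefficients, approximate idempotency, spectral rounding, assembly of the isometry), but three points prevent it from being a proof rather than a plan. First, a convention error that breaks the displayed computation: $g \mapsto \rho(g)^*$ is an anti-homomorphism, so with $A_\rho = \mbE_g\, \psi(g)\otimes\rho(g)^*$ the substitution $h=gk$ yields $\rho(g)\rho(k)^*\rho(g)^*$ in the second tensor factor, not $\rho(k)^*$; you need the conjugate representation $\overline{\rho(g)}$ there, and with that choice $A_\rho = \mbE_g(\psi\otimes\overline{\rho})(g)$ is \emph{itself} the invariant projection in the exact case (of rank $m_\rho$), so the various factors of $d_\rho$ you insert ($d_\rho A_\rho$ as the projection, $m_\rho = \tr(P_\rho)/d_\rho$) must be re-audited against whichever Fourier normalization you fix. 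Second, the Plancherel step gives only an \emph{aggregate} bound of the form $\sum_\rho d_\rho \norm{A_\rho^*A_\rho - A_\rho}_{HS}^2 \leq d\eps^2$; this does not force the spectrum of any individual $A_\rho^*A_\rho$ to cluster near $\{0,1\}$, and the assertion that every kept eigenvalue lies in $[1-4\eps^2,1]$ is false pointwise. The dimension bound $m \leq (1-4\eps^2)^{-1}d$ must instead be extracted by trace inequalities (e.g.\ $1-\lambda \leq 2\lambda(1-\lambda)$ for $\lambda \geq 1/2$, summed over the spectrum and over $\rho$) applied to the aggregate defect, which is how \cite{GH17,DOT17} proceed. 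Third, ``the partial isometries $W_\rho$ assemble into an isometry $U$'' is where the real work lives: the natural map built from the $W_\rho$ is only approximately isometric and must be corrected to an exact isometry (this is also what secures $m \geq d$) before the final Cauchy--Schwarz estimate; deferring this to bookkeeping leaves the central construction unverified.
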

\begin{proof}[Proof of Lemma \ref{L:clifford}]
    Suppose that $\psi$ is an $\eps$-homomorphism from $\mcC(n)$ to $\mcU(d)$
    with $\norm{\psi(J) - \Id}_f > 42\eps$. Since the distance between any two
    unitaries in the $\norm{\cdot}_f$ distance is at most $2$, we have that
    $\eps < 1/21 < 1/16$. Since $\mcC(n)$ is finite, there is an exact
    representation $\gamma : G \arr \mcU(\C^m)$ and isometry $U : \C^d \arr
    \C^m$ as in Theorem \ref{T:gowershatami}. To simplify formulas, note that
    $(1-4\eps^2) > 1/2$, so $m \leq 2 d$. 

    Now $U U^*$ is a projection, so 
    \begin{equation*}
        \norm{U^* A U}_f = \sqrt{\frac{1}{d} \tr(U^* A^* U U^* A U)}
            \leq \sqrt{\frac{1}{d} \tr(A^* A)} = \sqrt{\frac{m}{d}} \norm{A}_f
    \end{equation*}
    for any $m \times m$ matrix $A$. Hence 
    \begin{align*}
        \norm{\gamma(J)-\Id}_f 
            & \geq \sqrt{\frac{d}{m}} \norm{U^* (\gamma(J) -\Id) U}_f 
            = \sqrt{\frac{d}{m}} \norm{U^* \gamma(J) U - \Id}_f \\
            & \geq \sqrt{\frac{d}{m}} \left( \norm{\psi(J) - \Id}_f
                - \norm{\psi(J) - U^* \gamma(J) U}_f \right) \\
            & \geq \frac{1}{\sqrt{2}} \left( \norm{\psi(J) - \Id}_f - 42\eps\right)
                > 0.
    \end{align*}
    We can conclude that $\gamma(J) \neq \Id$. Since $\gamma(J)^2 = \Id$, the
    eigenvalues of $\gamma(J)$ are $\pm 1$, so the $(-1)$-eigenspace of
    $\gamma(J)$ must have positive dimension. Let $P$ be the projection onto
    this eigenspace. Since $J$ is central in $\mcC(n)$, $P \gamma P$ is a
    representation of $\mcC(n)$ (and hence of $\Cl(n)$) of dimension $\leq m$.
    Since all irreducible representations of $\Cl(n)$ have dimension
    $2^{\lfloor n/2 \rfloor}$, we conclude that $d \geq m/2 \geq 2^{\lfloor n/2
    \rfloor} / 2$. 
\end{proof}
\begin{rmk}\label{R:othernorms}
    Lemma \ref{L:clifford} also holds for the operator norm and the normalized
    Schatten $p$-norms (i.e.~the norms on $d \times d$ matrices of the form
    $\norm{A} = \left(\tr\left((A^* A)^{p/2}\right)/d\right)^{1/p}$ for some $p
    \geq 1$). Specifically, it follows from the results of \cite{Ka82} (for the
    operator norm) and \cite{GH17,DOT17} (for the $p$-norms) that if
    $\norm{\cdot}$ is one of these norms, then there are constants $C_0, C_1, k
    > 0$, such that if $\psi$ is an $\eps$-homomorphism from $\mcC(n)$ 
    to $\mcU(\C^d)$ with respect to $\norm{\cdot}$ such that $\eps \leq C_0$
    and $\norm{\psi(J) - \Id} > C_1 \eps$, then $d \geq 2^{\lfloor n/2
    \rfloor -k}$.
\end{rmk}

\begin{lemma}\label{L:uniformrep}
    If $\phi$ is a $d$-dimensional $\eps$-representation of $\mcC(n)$, then
    the function $\psi : \mcC(n) \arr \mcU(\C^d)$ defined by $\psi(g) = 
    \phi(\widetilde{g})$, where $\widetilde{g}$ is the normal form of $g$, is
    an $(n+1)^2 \eps$-homomorphism from $\mcC(n)$ to $\mcU(\C^d)$. 
\end{lemma}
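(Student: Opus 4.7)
The plan is to reduce the claim to an area bound for a trivial word in the free group, and then to exhibit an explicit reduction sequence.

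First, since $\phi$ is a homomorphism of $\Free(S)$, we have $\psi(g)\psi(h) = \phi(\widetilde{g})\phi(\widetilde{h}) = \phi(\widetilde{g}\widetilde{h})$ and $\psi(gh) = \phi(\widetilde{gh})$, so the word $w := \widetilde{g}\widetilde{h}\cdot(\widetilde{gh})^{-1}$ is trivial in $\mcC(n)$ and hence equals a product $\prod_{j=1}^{m} u_j r_j^{\pm 1} u_j^{-1}$ of conjugates of relations in $\Free(S)$, where $m$ may be taken to be the area of $w$. Using the telescoping identity
\begin{equation*}
    \prod_j A_j - \Id = \sum_j A_1\cdots A_{j-1}(A_j - \Id),
\end{equation*}
together with the unitary invariance of $\norm{\cdot}_f$ and the bound $\norm{\phi(r_j)^{\pm 1} - \Id}_f \leq \eps$ for every relation $r_j$, I obtain $\norm{\phi(w) - \Id}_f \leq m\eps$, which gives $\norm{\psi(gh) - \psi(g)\psi(h)}_f \leq m\eps$. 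Thus it suffices to show that the area of $w$ is at most $(n+1)^2$.

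For the area bound, I would write $\widetilde{g} = J^a x_{i_1}\cdots x_{i_k}$ and $\widetilde{h} = J^b x_{j_1}\cdots x_{j_\ell}$ with $i_1 < \cdots < i_k$, $j_1 < \cdots < j_\ell$, and $k, \ell \leq n$, so that both normal forms have length at most $n+1$. I would then construct an explicit reduction of $\widetilde{g}\widetilde{h}$ to $\widetilde{gh}$ in two stages. The first stage commutes $J^b$ to the front past each $x_{i_s}$ using the relations $[x_{i_s}, J] = 1$ and then combines the result with $J^a$ using $J^2 = 1$ if necessary. The second stage merges the two sorted $x$-lists into the sorted normal-form sequence of $\widetilde{gh}$, using adjacent swaps $x_p x_q \to J x_q x_p$ (derived from $[x_p, x_q] = J$), duplicate cancellations $x_i^2 = 1$, and $J$-manipulations ($[x, J] = 1$ and $J^2 = 1$) to propagate accumulated $J$'s to the front.

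The main obstacle will be bounding the second stage. The number of $x$-swaps required for the merge is at most the number of inversions $|\{(s, t) : i_s > j_t\}| \leq k\ell$, but each swap produces a $J$ in the middle of the word, and a naive policy of immediately moving each such $J$ to the front yields an $O(n^3)$ relation count. The key will be to amortize the $J$-handling: by cancelling pairs of $J$'s as soon as they become adjacent, and by scheduling the bubble-insertion of each $x_{j_t}$ so that at most one ``loose'' $J$ is carried along at a time, the cost of inserting $x_{j_t}$ can be kept linear in the current normal-form length. Summing this cost over the $\ell \leq n$ insertions and adding the at most $n+1$ relations from the first stage, the total area stays within $(n+1)^2$, completing the proof.
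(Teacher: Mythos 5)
Your overall strategy is the same as the paper's: reduce to bounding the number of defining-relation applications needed to transform $\widetilde{g}\widetilde{h}$ into $\widetilde{g h}$ in $\mcF$, then exhibit an explicit reduction in which the $J$'s produced by anticommutation swaps are amortized. You correctly identify the one real subtlety --- that naively shuttling each new $J$ to the front costs $O(n^3)$ --- and your proposed fix (cancel $J$'s as soon as they become adjacent, carry at most one loose $J$) is exactly the trick the paper uses.

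Where the sketch falls short is the arithmetic at the end. You bound the number of swaps by the inversion count $\leq k\ell \leq n^2$, and each swap must also pay for at least one $J$-commute-or-cancel step, so the honest total from your scheme is on the order of $2n^2$, which exceeds $(n+1)^2 = n^2 + 2n + 1$ once $n \geq 3$; the jump from ``cost linear in the current length'' to ``total within $(n+1)^2$'' is asserted rather than verified. The paper's count is sharper because it inserts the generators of $\widetilde{g}$ from the \emph{left} and observes that $x_j$ then passes only the at most $j-1$ elements of smaller index; summing $2j$ over the distinct indices $j \in \{1,\dots,n\}$ gives $n(n+1)$, which comfortably fits in $(n+1)^2$ after the one extra $J^2$ step. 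In your right-to-left merge the analogous observation would be that $x_{j_t}$ passes at most $n - j_t$ elements (since all indices are distinct and bounded by $n$); your sketch does not use this, and needs it. That said, the exact constant is immediately absorbed into $C_3$ in the proof of Theorem~\ref{T:main}, so any $O(n^2)$ bound would serve the paper's purpose; the gap is in matching the stated constant, not in the substance of the argument.
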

\begin{proof}
    Suppose $\widetilde{g} = J^a x_{i_1} \cdots x_{i_k} x_j^b x_{i_{k+1}} \cdots
    x_{i_{m}}$ is a word in normal form, so $i_1 < \ldots < i_k < j < i_{k+1} <
    \ldots < i_m$ and $a,b \in \Z_2$. In $\mcC(n)$, we have that
    \begin{align*}
        x_j \cdot \widetilde{g} & = J^a x_j x_{i_1} \cdots x_j^b \cdots x_{i_m} 
            \quad (\text{at most $1$ application of } [x_j, J]=1) \\
        &= J^a J x_{i_1} J x_{i_2} \cdots J x_{i_k} x_j x_j^b \cdots x_{i_m}
            \quad (\text{$k$ applications of }x_j x_i = J x_i x_j) \\
        &= J^{a+k} x_{i_1} x_{i_2} \cdots x_{i_k} x_j x_j^b \cdots x_{i_m} 
            \quad (\text{$k$ applications of }[x_i,J]=1 \text{ and } J^2=1) \\
        &= J^{a+k} x_{i_1} \cdots x_{i_k} x_j^{b+1} \cdots x_{i_m} 
            \quad (\text{at most $1$ application of } x_j^2 = 1).
    \end{align*}
    Note that all exponents in the above equation are interpreted modulo $2$,
    and that to get the third line with only $k$ applications of the relations,
    we start by moving the last $J$ to the left, cancelling $J$'s as we go.
    Since $k \leq j-1$, we can put $x_j \cdot \widetilde{g}$ in normal form
    after at most $2(j-1) + 2 = 2j$ applications of the relations. The product
    $J \cdot \widetilde{g}$ can be put in normal form after applying the relation
    $J^2=1$ at most once. It follows that we can put the product $\widetilde{g_1}
    \cdot \widetilde{g_2}$ of two normal forms $\widetilde{g_1}$, $\widetilde{g_2}$ in
    normal form after at most $n(n+1) + 1 \leq (n+1)^2$ applications of the
    defining relations for $\mcC(n)$. If $\norm{\phi(r) - \Id}_f \leq \eps$
    for every defining relation $r$ of $\mcC(n)$, then 
    \begin{equation*}
        \norm{\phi(\widetilde{g_1 \cdot g_2}) -
            \phi(\widetilde{g_1})\phi(\widetilde{g_2})}_f \leq (n+1)^2 \eps
    \end{equation*}
    for all elements $g_1,g_2 \in \mcC(n)$. 
\end{proof}

\begin{proof}[Proof of Theorem \ref{T:main}]
    Let $\mcK$ be the group from the last section. By Corollary
    \ref{C:construction}, there is an embedding $\gamma$ of $\mcK$ in a
    finitely-presented group $\mcG = \langle S : R \rangle$ with isoperimetric
    function $C_0 n^{26}$ for some constant $C_0$. Choose a lift
    $\widetilde{\gamma} : \mcF(\{J,t,x_0,z\}) \arr \mcF(S)$ of $\gamma$, and
    let $C_1$ be the maximum of the lengths of the words
    $\widetilde{\gamma}(J)$, $\widetilde{\gamma}(t)$
    $\widetilde{\gamma}(x_0)$, and $\widetilde{\gamma}(z)$. If $w$ is a word
    over $\{J,t,x_0,z\}$ of length $m$, then $\widetilde{\gamma}(w)$ will have
    length $\leq C_1 m$. As a result, if $w$ is trivial in $\mcK$, then
    $\widetilde{\gamma}(w)$ will have area $\leq C_2 m^{26}$, where $C_2 = C_0
    C_1^{26}$. This means that $\widetilde{\gamma}(w)$ can be transformed to
    $1$ in $\mcF(S)$ by at most $C_2
    m^{26}$ applications of the defining relations $R$ of $\mcG$. One subtlety
    of this definition is that such a transformation may involve substitutions
    of the form $s s^{-1} \arr 1$, $s^{-1} s \arr 1$, $1 \arr s s^{-1}$, or $1
    \arr s^{-1} s$, $s \in S$, which are not counted towards the $C_2
    m^{26}$ bound. However, if $\phi$ is an $\eps$-representation of $\mcG$,
    then $\phi(s^{-1}) = \phi(s)^{-1}$ for all $s \in S$
    by definition.  Consequently, if $w$ is a (possibly non-reduced) word over
    $\{J,t,x_0,z\}$ of length $\leq m$ which is trivial in $\mcK$, then 
    \begin{equation}\label{E:isoperimetric}
        \norm{\phi(\widetilde{\gamma}(w)) - \Id}_f \leq C_2 m^{26} \eps.
    \end{equation}

    Given a positive integer $k$, set $z(k) := \prod_{i=0}^{\ell} t^i
    z^{k_i} t^{-i}$, where $k= \sum_{i=0}^{\ell} k_i 2^i$ is the binary 
    expansion of $k$, so $k_0,\ldots,k_{\ell} \in \{0,1\}$ and $k_{\ell}=1$. 
    Let $x(k) := z(k) x_0 z(k)^{-1}$, regarded as a word over
    $\{t^{\pm 1}, x_0, z^{\pm 1}\}$.  Since $\ell = O(\log_2 k)$, the length of
    $z(k)$ is $O((\log_2 k)^2)$, and hence the length of $x(k)$ is
    also $O((\log_2 k)^2)$. Of course, in $\mcK$ we have $z(k) = z^k$, and
    $x(k) = z^k x_0 z^{-k} = x_k$, a generator of $\mcC$. If $\phi$ is
    a $d$-dimensional $\eps$-representation of $\mcG$, then for every $n \geq
    2$, we can define an approximate representation $\psi_n :
    \Free(\{J,x_1,\ldots,x_n\}) \arr \mcU(\C^d)$ of $\mcC(n)$ by $\psi_n(J) =
    \phi(\widetilde{\gamma}(J))$ and $\psi_n(x_i) =
    \phi(\widetilde{\gamma}(x(i)))$. The words $x(i)^2$, $[x(i),J]$, and
    $J^{-1} [x(i), x(j)]$, $i \neq j$, are trivial in $\mcK$, and if we
    restrict to $1 \leq i \neq j \leq n$, have length $O((\log_2
    n)^2)$.  By Equation \eqref{E:isoperimetric}, there is a constant $C_3$ (independent
    of $\phi$)
    such that $\psi_n$ is a
    $C_3 (\log_2 n)^{52} \eps$-representation for all $n \geq 2$ (starting at $n=2$ avoids
    the technical problem that $\log_2 1 = 0$). By Lemma \ref{L:uniformrep},
    $\psi_n$ induces a $C_3 (n+1)^2 (\log_2 n)^{52}\eps$-homomorphism
    $\widetilde{\psi}_n$ from $\mcC(n) \arr \mcU(\C^d)$. Thus, for any $\kappa >
    0$ we can find a constant $C_4$ (again, independent of $\phi$) such that
    $\widetilde{\psi}_n$ is a $C_4 n^{2+\kappa}\eps$-homomorphism from $\mcC(n)
    \arr \mcU(\C^d)$ for all $n \geq 2$. 
    
    Finally, suppose $\phi$ is a $d$-dimensional $\eps$-representation of
    $\mcG$ with 
    \begin{equation*}
        \norm{\phi(\widetilde{\gamma}(J)) - \Id}_f \geq \delta > 0.
    \end{equation*}
    Then the approximate homormophisms $\widetilde{\psi}_n$ defined in the last
    paragraph have $\widetilde{\psi}_n(J) = \psi_n(J) = \phi(\widetilde{\gamma}(J))$,
    so $\norm{\widetilde{\psi}_n(J) - \Id}_f \geq \delta$.
    If we set, for instance,
    \begin{equation*}
        n = \left\lfloor \left( \frac{\delta}{84 C_4 \eps} \right)^{1/(2+\kappa)} \right\rfloor,
    \end{equation*}
    then $42 C_4 n^{2+\kappa} \eps < \delta$, so $d \geq 2^{\lfloor n/2 \rfloor -1}$
    by Lemma \ref{L:clifford}. Using the fact that $\lfloor x \rfloor \geq
    x-1$, we conclude that the theorem is true with $w =
    \widetilde{\gamma}(J)$, $C' = 2^{-5/2}$, $C = \ln(2)/2(84 C_4)^{1/(2+\kappa)}$, and
    $\alpha = 1/(2 + \kappa)$. 
\end{proof}
By Remark \ref{R:othernorms}, the proof also works for the operator norm and
the normalized Schatten $p$-norms, with different constants for $C'$ and $C$.
The exponent $\alpha$ is the same for any norm, and can be taken to be
$\alpha = 1/(2+\kappa)$ for any $\kappa > 0$.  We do not know if it is possible
to increase $\alpha$, say to $\alpha = 1/2$. On the other hand, if the value of
the exponent $\alpha$ is not important, then the proof of Theorem \ref{T:main}
can be shortened. First, we can simplify the construction of $\mcG$ by using
$\mcK_0$ in place of $\mcK$.  This simplifies Lemma \ref{L:wordproblem}, but
increases the $(\log_2 n)^{52}$ factor to a polynomial factor $n^p$. Second,
rather than using Lemma \ref{L:uniformrep} to turn approximate representations
into approximate homomorphisms, we can define the approximate homomorphisms
$\widetilde{\psi}_n$ in the proof directly as $\widetilde{\psi}_n(g) =
\phi(\widetilde{\gamma}(\widetilde{g}))$, where $\widetilde{g}$ is the normal
form $J^b x_{i_1} \cdots x_{i_k}$ of $g \in \mcC(n)$, and apply Equation
\eqref{E:isoperimetric} to the relations $\widetilde{g_1} \cdot \widetilde{g_2}
= \widetilde{g_1 \cdot g_2}$, in the full multiplication table of $\mcC(n)$,
rather than just to the defining relations. 

We finish with the proofs of Corollaries \ref{C:main} and \ref{C:game}.
\begin{proof}[Proof of Corollary \ref{C:main}]
    Let $\mcG = \langle S : R \rangle$ be the group from Theorem \ref{T:main},
    so that there is $w \in \mcF(S)$ and constants $\alpha, C_0, C_1$ such that
    \begin{equation*}
        \hlp(w;\delta,\eps) \geq C_1 \cdot \exp (C_0 (\delta / \eps)^{\alpha}). 
    \end{equation*}
    Fix $\delta < \sqrt{2}$. By \cite[Proposition 7.2]{SV17}, there is a finite
    subset $E \subseteq \mcG$ and constants $C_2,N > 0$ such that
    \begin{equation*}
        \sigma(E;n) \geq \hlp(w;\delta,C_2/\sqrt{n})
    \end{equation*}
    for all $n \geq N$. Since $\sigma(E;n) \geq 1$ for all parameters $E,n$, we
    can find constants $C',C > 0$ such that 
    \begin{equation*}
        \sigma(E;n) \geq C' \cdot \exp(C n^{\alpha/2} )
    \end{equation*}
    for all $n \geq 1$.
\end{proof}

\begin{proof}[Proof of Corollary \ref{C:game}]
    Let $\mcG$ be the group from Theorem \ref{T:main}, and take a presentation
    of $\mcG$ where the element $J$ is one of the generators. Let 
    \begin{equation*}
        \mcG' := \langle \mcG, s, J' : [\mcG,J'] = [s,J'] = (J')^2 = 1, [s,J]=J' \rangle.
    \end{equation*} 
    Note that  $\mcG'$ is the HNN extension of the subgroup $\langle \mcG, J'
    \rangle \iso \mcG \times \Z_2$ by the automorphism of $\langle J,J' \rangle
    \iso \Z_2 \times \Z_2$ sending $J' \mapsto J'$ and $J \mapsto J J'$. Hence
    $J'$ is a central element of order $2$ in $\mcG'$. Also, if $\psi$ is an
    $\eps$-representation of $\mcG'$, then 
    \begin{align*}
        \norm{\psi(J') - \Id}_f & \leq \norm{\psi(J') - \psi([s,J])}_f
            + \norm{\psi([s, J]) - \Id}_f \\
        & \leq \eps + \norm{\psi(s J s^{-1}) (\psi(J^{-1}) - \Id)}_f 
                + \norm{\psi(s J s^{-1}) - \Id}_f \\
            & = \eps + 2 \norm{\psi(J) - \Id}_f.
    \end{align*}
    Since any $\eps$-representation of $\mcG'$ restricts to an
    $\eps$-representation of $\mcG$, we conclude that 
    \begin{equation*}
        \hlp(J'; \delta, \eps) \geq \hlp(J; (\delta-\eps)/2, \eps).
    \end{equation*}
    We conclude in particular that for $\delta = 2$ and $\eps \leq 1$,
    \begin{equation*}
        \hlp(J'; 2, \eps) \geq C' \exp(C / \eps^{\alpha})
    \end{equation*}
    for constants $0 < \alpha < 1/2$ and $C, C' > 0$. 
        
    Now by \cite[Theorem 3.1]{Sl16}, there is an embedding of $\mcG'$ in the
    solution group $\Gamma$ of some linear system game $G$, and furthermore
    this embedding can be chosen to send $J'$ to the distinguished element
    $J_{\Gamma}$ of $\Gamma$.  Since $J'$ is non-trivial in $\mcG'$,
    $J_{\Gamma}$ will be non-trivial in $\Gamma$, and hence the game $G$
    associated to $\Gamma$ will have commuting-operator value $1$ by
    \cite[Theorem 4]{CLS16}. It follows from \cite[Corollary 5.2]{SV17} that
    there is a constant $C_0$ such that any $\eps$-perfect strategy for $G$ has
    local Hilbert space dimension $\geq \hlp(J_{\Gamma}, 2, C_0 \eps^{1/4})$. 
    Finally, since $\mcG'$ is a subgroup of $\Gamma$, there is a constant $C_1$
    such that $\hlp(J_{\Gamma}, 2, \eps) \geq \hlp(J', 2, C_1 \eps)$ (see, e.g.,
    \cite[Lemma 2.4]{SV17}), and the corollary follows.
\end{proof}
Another consequence of \cite{SV17} is that the game associated to a solution
group $\Gamma$ has quantum value $1$ if and only if the distinguished element
$J_{\Gamma}$ is non-trivial in approximate representations of $\Gamma$. 
Since we do not know if the group constructed $\mcG$ constructed in the proof
of Theorem \ref{T:main} is hyperlinear, we do not know if the element $J$ is
non-trivial in approximate representations of $\mcG$. Since the
class of hyperlinear groups is closed under HNN extensions by amenable subgroups,
if $J$ is non-trivial in approximate representations of $\mcG$, then the element
$J'$ in the proof of Corollary \ref{C:game} will be non-trivial in approximate
representations of $\mcG'$. However, this would still not be enough to show
that $J_{\Gamma}$ is non-trivial in approximate representations of $\Gamma$, 
since it is not known if the embedding constructed in \cite[Theorem 3.1]{Sl16}
preserves the property of being non-trivial in approximate representations.

\bibliographystyle{amsalpha}
\bibliography{approx}

\end{document}